\documentclass[reqno]{amsart}
\usepackage{hyperref}

\usepackage{amsmath,amssymb,amsthm}
\usepackage{enumerate}
\usepackage{color}
\usepackage{epsfig}
\usepackage{graphics}
\usepackage{graphicx}
\usepackage{subfigure}

%\newcommand{\com }{{\rm com}}
%\newcommand{\scom}{{\rm scom}}

%%%%%%%%%%%%%%%%%%%%%%%%%%%%%%%%%

\begin{document}
\title[]{parabolic non-automorphism induced toeplitz-composition c*-algebras with piece-wise quasi-continuous symbols}% The title of the article
\author[u. g\"{u}l]{u\u{g}ur g\"{u}l}

\address{u\u{g}ur g\"{u}l,  \newline
Hacettepe University, Department of Mathematics, 06800, Beytepe,
Ankara, TURKEY}
\email{\href{mailto:gulugur@gmail.com}{gulugur@gmail.com}}
%\href{mailto:myaddress@wikibooks.org}{myaddress@wikibooks.org}
%\email{{mailto:aozbekler@gmail.com}{aozbekler@gmail.com}}

\thanks{Date: 28/05/2012}

\subjclass[2000]{47B35} \keywords{C*-algebras, Toeplitz Operators,
Composition Operators, Hardy Spaces, Essential Spectra.}
\begin{abstract}
In this paper we consider the C*-algebra
$C^{*}(\{C_{\varphi}\}\cup\mathcal{T}(PQC(\mathbb{T})))/K(H^{2})$
generated by Toeplitz operators with piece-wise quasi-continuous
symbols and a composition operator induced by a parabolic linear
fractional non-automorphism symbol modulo compact operators on the
Hilbert-Hardy space $H^{2}$. This C*-algebra is commutative. We
characterize its maximal ideal space. We apply our results to the
question of determining the essential spectra of linear
combinations of a class of composition operators and Toeplitz
operators.

\end{abstract}

\maketitle
%\numberwithin{equation}{section}
\newtheorem{theorem}{Theorem}
\newtheorem{acknowledgement}[theorem]{Acknowledgement}
\newtheorem{algorithm}[theorem]{Algorithm}
\newtheorem{axiom}[theorem]{Axiom}
\newtheorem{case}[theorem]{Case}
\newtheorem{claim}[theorem]{Claim}
\newtheorem{conclusion}[theorem]{Conclusion}
\newtheorem{condition}[theorem]{Condition}
\newtheorem{conjecture}[theorem]{Conjecture}
\newtheorem{corollary}[theorem]{Corollary}
\newtheorem{criterion}[theorem]{Criterion}
\newtheorem{definition}[theorem]{Definition}
\newtheorem{example}[theorem]{Example}
\newtheorem{exercise}[theorem]{Exercise}
\newtheorem{lemma}[theorem]{Lemma}
\newtheorem{notation}[theorem]{Notation}
\newtheorem{problem}[theorem]{Problem}
\newtheorem{proposition}[theorem]{Proposition}
\newtheorem{remark}[theorem]{Remark}
\newtheorem{solution}[theorem]{Solution}
\newtheorem{summary}[theorem]{Summary}
\newtheorem*{thma}{Main Theorem}
\newtheorem*{thmc}{Theorem C}
\newtheorem*{thmd}{Theorem D}
\newcommand{\norm}[1]{\left\Vert#1\right\Vert}
\newcommand{\abs}[1]{\left\vert#1\right\vert}
\newcommand{\set}[1]{\left\{#1\right\}}
\newcommand{\Real}{\mathbb R}
\newcommand{\eps}{\varepsilon}
\newcommand{\To}{\longrightarrow}
\newcommand{\BX}{\mathbf{B}(X)}
\newcommand{\A}{\mathcal{A}}

\section*{introduction}
Toeplitz-composition C*-algebras are C*-algebras generated by the
shift operator $T_{z}$ and a composition operator $C_{\varphi}$
induced by a linear fractional self-map $\varphi$ of $\mathbb{D}$.
The structure of these C*-algebras rely heavily upon whether
$\varphi$ is an automorphism or not. The automorphism case was
investigated in detail by Jury in [Jur] whereas the
non-automorphism case was investigated by Kriete, MacCluer and
Moorhouse in [KMM]. However before [Jur] and [KMM], it was shown
by Bourdon, Levi, Narayan and Shapiro [BLNS] that for any
parabolic linear-fractional non-automorphism $\varphi$, the
self-commutator
$C_{\varphi}^{*}C_{\varphi}-C_{\varphi}C_{\varphi}^{*}$ of the
composition operator and the commutator
$T_{z}C_{\varphi}-C_{\varphi}T_{z}$ are compact on the Hardy space
$H^{2}$. Hence if $\varphi$ is a parabolic non-automorphism then
the Toeplitz composition C*-algebra is commutative modulo compact
operators. It is our aim in this paper to extend this result to
Toeplitz composition C*-algebras whose Toeplitz operators have a
larger class of symbols namely the piece-wise quasi-continuous
class $PQC(\mathbb{T})$.

For a given subalgebra $B\subseteq L^{\infty}(\mathbb{T})$, the
Toeplitz C*-algebra generated by Toeplitz operators with symbols
in $B$ is denoted by $\mathcal{T}(B)$ and is defined as
\begin{equation*}
\mathcal{T}(B)=C^{*}(\{T_{a}:a\in B\})
\end{equation*}
When $B=C(\mathbb{T})$ the algebra continuous functions on the
circle, Coburn [Co] showed that for any $a,b\in C(\mathbb{T})$,
$$T_{a}T_{b}-T_{ab}\in K(H^{2})$$ and hence
$\mathcal{T}(C(\mathbb{T}))/K(H^{2})$ is commutative. Coburn has
also shown that
\begin{equation*}
\mathcal{T}(C(\mathbb{T}))/K(H^{2})\cong C(\mathbb{T})
\end{equation*}
This means that for any $a\in C(\mathbb{T})$, $T_{a}$ is Fredholm
if and only if $a(\lambda)\neq 0$ $\forall\lambda\in\mathbb{T}$.
In this case the Fredholm index $ind(T_{a})$ is given by
\begin{equation*}
ind(T_{a})=-\frac{1}{2\pi i}\int_{a(\mathbb{T})}\frac{dz}{z}
\end{equation*}
Moreover one has $\forall a\in C(\mathbb{T})$ and $\forall b\in
L^{\infty}(\mathbb{T})$, $$T_{a}T_{b}-T_{ab}\in K(H^{2}).$$
Douglas [Dou] carried Coburn's result one step further by showing
that $\forall a\in H^{\infty}+C$ and $\forall b\in
L^{\infty}(\mathbb{T})$,
$$T_{a}T_{b}-T_{ab}\in K(H^{2}).$$ From this one deduces that
$\mathcal{T}(QC(\mathbb{T})/K(H^{2})$ is a commutative C*-algebra
and moreover
\begin{equation*}
\mathcal{T}(QC(\mathbb{T}))/K(H^{2})\cong QC(\mathbb{T})
\end{equation*}
where $QC(\mathbb{T})=(H^{\infty}+C)\cap\overline{(H^{\infty}+C)}$
is the algebra of quasi-continuous functions on $\mathbb{T}$. The
Toeplitz C*-algebra $\mathcal{T}(PC(\mathbb{T}))/K(H^{2})$ was
analyzed in detail by Gohberg and Krupnik in [GoK] where
$PC(\mathbb{T})$ is the algebra of piece-wise continuous functions
on $\mathbb{T}$ which is defined as follows:
\begin{equation*}
PC(\mathbb{T})=\{a\in
L^{\infty}(\mathbb{T}):\lim_{\theta\rightarrow 0^{+}}a(\lambda
e^{i\theta})=a(\lambda^{+}),\lim_{\theta\rightarrow
0^{-}}a(\lambda
e^{i\theta})=a(\lambda^{-})\quad\textrm{exist}\quad\forall\lambda\in\mathbb{T}\}
\end{equation*}
They showed that $\forall a,b\in PC(\mathbb{T})$ the commutator
$$T_{a}T_{b}-T_{b}T_{a}\in K(H^{2}),$$ however the semi-commutator
$$T_{a}T_{b}-T_{ab}\not\in K(H^{2})$$ unless $a$ and $b$ have no
common point of discontinuity. Hence
$\mathcal{T}(PC(\mathbb{T}))/K(H^{2})$ is a commutative C*-algebra
but is not isometrically isomorphic to $PC(\mathbb{T})$. Gohberg
and Krupnik also give an explicit description of the maximal ideal
space of $\mathcal{T}(PC(\mathbb{T}))/K(H^{2})$ as the cylinder
$\mathbb{T}\times [0,1]$ equipped with the topology consisting of
the subsets
\begin{eqnarray*}
& &
U(e^{i\varphi_{0}},0)=\{(e^{i\varphi},t):\varphi_{0}-\delta<\varphi<\varphi_{0},0\leq
t\leq 1\}\cup\{(e^{i\varphi_{0}},t):0\leq t\leq\varepsilon\}\\
& &
U(e^{i\varphi_{0}},1)=\{(e^{i\varphi},t):\varphi_{0}<\varphi<\varphi_{0}+\delta,0\leq
t\leq 1\}\cup\{(e^{i\varphi_{0}},t):1-\varepsilon\leq t\leq 1\}\\
& &
U(e^{i\varphi_{0}},t_{0})=\{(e^{i\varphi_{0}},t):t_{0}-\delta_{1}<t<t_{0}+\delta_{2}\}\\
\end{eqnarray*}
where the action of any $(\lambda,t)\in\mathbb{T}\times [0,1]$ on
any $[T_{u}]\in\mathcal{T}(PC(\mathbb{T}))/K(H^{2})$ with $u\in
PC(\mathbb{T})$ is given by
\begin{equation*}
(\lambda,t)([T_{u}])=tu(\lambda^{+})+(1-t)u(\lambda^{-}).
\end{equation*}
In [Sar1] Sarason examined the Toeplitz C*-algebra
$\mathcal{T}(PQC(\mathbb{T}))$ in detail where $PQC(\mathbb{T})$
is the algebra of functions generated by $PC(\mathbb{T})$ and
$QC(\mathbb{T})$. By the results of Gohberg-Krupnik and Douglas,
it is clear that $\mathcal{T}(PQC(\mathbb{T}))/K(H^{2})$ is a
commutative C*-algebra. In [Sar2] Sarason characterized the
maximal ideal space $M(\mathcal{T}(PQC(\mathbb{T}))/K(H^{2}))$ of
$\mathcal{T}(PQC(\mathbb{T}))/K(H^{2})$ as a certain subset of
$M(QC(\mathbb{T}))\times [0,1]$. In [Gul] we introduced a class of
operators on $H^{2}(\mathbb{D})$ called the ``Fourier
multipliers". For any $\vartheta\in C([0,\infty])$, the Fourier
multiplier $D_{\vartheta}:H^{2}(\mathbb{D})\rightarrow
H^{2}(\mathbb{D})$ is defined as
\begin{equation*}
D_{\vartheta}=\Phi^{-1}\circ\mathcal{F}^{-1}\circ
M_{\vartheta}\circ\mathcal{F}\circ\Phi
\end{equation*}
where $\Phi: H^{2}(\mathbb{D})\rightarrow H^{2}(\mathbb{H})$ is
the isometric isomorphism
\begin{equation*}
(\Phi f)(z)=\frac{1}{z+i}f\left(\frac{z-i}{z+i}\right),
\end{equation*}
$\mathcal{F}$ is the Fourier transform and
$M_{\vartheta}:L^{2}(\mathbb{R}^{+})\rightarrow
L^{2}(\mathbb{R}^{+})$ is the multiplication operator
\begin{equation*}
(M_{\vartheta}f)(t)=\vartheta(t)f(t)
\end{equation*}
We showed in [Gul] that for any $\vartheta\in C([0,\infty])$ and
$a\in QC(\mathbb{T})$ the commutator
\begin{equation*}
[T_{a},D_{\vartheta}]=T_{a}D_{\vartheta}-D_{\vartheta}T_{a}\in
K(H^{2}).
\end{equation*}
In her thesis, R. Schmitz [Sch] studied the C*-algebra generated
by Toeplitz operators with $PC(\mathbb{T})$ symbols and the
composition operator with parabolic linear-fractional
non-automorphism symbol. The symbol $\varphi_{a}$ of this
composition operator, where $a\in\mathbb{C}$ with $\Im(a)>0$,
looks like
\begin{equation*}
\varphi_{a}(z)=\frac{(2i-a)z+a}{-az+a+2i}
\end{equation*}
Since
\begin{equation}
C_{\varphi_{a}}=T_{2i-a-az}D_{\vartheta_{a}}
\end{equation}
where $\vartheta_{a}(t)=e^{iat}$, the C*-algebra that Schmitz
considered coincides with the C*-algebra generated by Toeplitz
operators with $PC(\mathbb{T})$ symbols and continuous Fourier
multipliers. Schmitz showed that this C*-algebra is commutative
modulo $K(H^{2})$ by showing that the commutator
$[T_{a},D_{\vartheta}]\in K(H^{2})$. Let
$$\Psi(PQC(\mathbb{T}),C([0,\infty])=C^{*}(\mathcal{T}(PQC(\mathbb{T}))\cup
F_{C([0,\infty])})$$ where
$F_{C([0,\infty])}=\{D_{\vartheta}:\vartheta\in C([0,\infty])\}$
is the set of all continuous Fourier multipliers. By equation (1)
we also have
$$\Psi(PQC(\mathbb{T}),C([0,\infty])=C^{*}(\mathcal{T}(PQC(\mathbb{T}))\cup\{C_{\varphi_{a}}\})$$
for $\Im(a)>0$. Then by [Gul] and [Sch],
$\Psi(PQC(\mathbb{T}),C([0,\infty])/K(H^{2})$ is a commutative
C*-algebra. In this work we describe the maximal ideal space of
this C*-algebra. In particular we prove the following result:
\begin{thma}
Let
$$\Psi(PQC(\mathbb{T}),C([0,\infty])=C^{*}(\mathcal{T}(PQC(\mathbb{T}))\cup
F_{C([0,\infty])})=C^{*}(\mathcal{T}(PQC(\mathbb{T}))\cup\{C_{\varphi_{a}}\})$$
be the C*-algebra generated by Toeplitz operators with piece-wise
quasi-continuous symbols and Fourier multipliers on the Hardy
space $H^{2}(\mathbb{D})$. Then\\
$\Psi(PQC(\mathbb{T}),C([0,\infty])/K(H^{2})$ is a commutative
C*-algebra and its maximal ideal space is characterized as
\begin{equation*}
M(\Psi)\cong (M_{1}(\mathcal{T}(PC))\times M_{1}(QC)\times
[0,\infty])\cup
([\cup_{\lambda\in\mathbb{T}}(M_{\lambda}(\mathcal{T}(PC))\times
M_{\lambda}(QC))]\times\{\infty\})
\end{equation*}
where $M_{\lambda}(\mathcal{T}(PC))=\{x\in
M(\mathcal{T}(PC)):x\mid_{C(\mathbb{T})}=\delta_{\lambda},\delta_{\lambda}(T_{f})=f(\lambda)\}$
and $M_{\lambda}(QC)=\{x\in
M(QC):x\mid_{C(\mathbb{T})}=\delta_{\lambda},\delta_{\lambda}(T_{f})=f(\lambda)\}$
are the fibers of $M(\mathcal{T}(PC))$ and $M(QC)$ at $\lambda$
respectively.
\end{thma}
  In [Gul] we showed that a certain class of composition operators
  acting on $H^{2}(\mathbb{D})$, called ``the quasi-parabolic"
  composition operators fall inside the C*-algebra \\
  $\Psi(QC(\mathbb{T}),C([0,\infty]))$. Using this result we
  determine the essential spectra of linear combinations of
  quasi-parabolic composition operators and Toeplitz operators
  with piece-wise quasi-continuous symbols by the above
  characterization of the maximal ideal space of
  $\Psi(PQC(\mathbb{T}),C([0,\infty])/K(H^{2})$.
\section{preliminaries}
In this section we fix the notation that we will use throughout
and recall some preliminary facts that will be used in the sequel.

Let $S$ be a compact Hausdorff topological space. The space of all
complex valued continuous functions on $S$ will be denoted by
$C(S)$. For any $f\in C(S)$, $\parallel f\parallel_{\infty}$ will
denote the sup-norm of $f$, i.e. $$\parallel
f\parallel_{\infty}=\sup\{\mid f(s)\mid:s\in S\}.$$ For a Banach
space $X$, $K(X)$ will denote the space of all compact operators
on $X$ and $\mathcal{B}(X)$  will denote the space of all bounded
linear operators on $X$. The open unit disc will be denoted by
$\mathbb{D}$, the open upper half-plane will be denoted by
$\mathbb{H}$, the real line will be denoted by $\mathbb{R}$ and
the complex plane will be denoted by $\mathbb{C}$. The one point
compactification of $\mathbb{R}$ will be denoted by
$\dot{\mathbb{R}}$ which is homeomorphic to $\mathbb{T}$. For any
$z\in$ $\mathbb{C}$, $\Re(z)$ will denote the real part, and
$\Im(z)$ will denote the imaginary part of $z$, respectively. For
any subset $S\subset$ $B(H)$, where $H$ is a Hilbert space, the
C*-algebra generated by $S$ will be denoted by $C^{*}(S)$. The
Cayley transform $\mathfrak{C}$ will be defined by
\begin{equation*}
\mathfrak{C}(z)=\frac{z-i}{z+i}.
\end{equation*}
For any $a\in$ $L^{\infty}(\mathbb{R})$ (or $a\in$
$L^{\infty}(\mathbb{T})$), $M_{a}$ will be the multiplication
operator on $L^{2}(\mathbb{R})$ (or $L^{2}(\mathbb{T})$) defined
as
\begin{equation*}
M_{a}(f)(x)=a(x)f(x).
\end{equation*}
For convenience, we remind the reader of the rudiments of Gelfand
theory of commutative Banach algebras and Toeplitz operators.

Let $A$ be a commutative Banach algebra. Then its maximal ideal
space $M(A)$ is defined as
\begin{equation*}
    M(A)=\{x\in A^{*}:x(ab)=x(a)x(b)\quad\forall a,b\in A\}
\end{equation*}
where $A^{*}$ is the dual space of $A$. If $A$ has identity then
$M(A)$ is a compact Hausdorff topological space with the weak*
topology. The Gelfand transform $\Gamma:A\rightarrow C(M(A))$ is
defined as
\begin{equation*}
    \Gamma(a)(x)=x(a).
\end{equation*}
 If $A$ is a commutative C*-algebra with
identity, then $\Gamma$ is an isometric *-isomorphism between $A$
and $C(M(A))$. If $A$ is a C*-algebra and $I$ is a two-sided
closed ideal of $A$, then the quotient algebra $A/I$ is also a
C*-algebra (see [Dav]).
 For $a\in A$ the spectrum $\sigma_{A}(a)$ of $a$ on $A$
is defined as
\begin{equation*}
    \sigma_{A}(a)=\{\lambda\in\mathbb{C}:\lambda e-a\ \ \textrm{is not invertible in}\ A\},
\end{equation*}
where $e$ is the identity of $A$. We will use the spectral
permanency property of C*-algebras (see [Rud], pp. 283 and [Dav],
pp.15); i.e. if $A$ is a C*-algebra with identity and $B$ is a
closed *-subalgebra of $A$, then for any $b\in B$ we have
\begin{equation}
\sigma_{B}(b)=\sigma_{A}(b).
\end{equation}
To compute essential spectra we employ the following important
fact (see [Rud], pp. 268 and [Dav], pp. 6, 7): If $A$ is a
commutative Banach algebra with identity then for any $a\in A$ we
have
\begin{equation}
    \sigma_{A}(a)=\{\Gamma(a)(x)=x(a):x\in M(A)\}.
\end{equation}
In general (for $A$ not necessarily commutative), we have
\begin{equation}
    \sigma_{A}(a)\supseteq\{x(a):x\in M(A)\}.
\end{equation}

For a Banach algebra $A$, we denote by $com(A)$ the closed ideal
in $A$ generated by the commutators
$\{a_{1}a_{2}-a_{2}a_{1}:a_{1},a_{2}\in A\}$. It is an algebraic
fact that the quotient algebra $A/com(A)$ is a commutative Banach
algebra. The reader can find detailed information about Banach and
C*-algebras in [Rud] and [Dav] related to what we have reviewed so
far.

The essential spectrum $\sigma_{e}(T)$ of an operator $T$ acting
on a Banach
  space $X$ is the spectrum of the coset of $T$ in the Calkin algebra
  $\mathcal{B}(X)/K(X)$, the algebra of bounded linear operators modulo
  compact operators. The well known Atkinson's theorem identifies the essential
  spectrum of $T$ as the set of all $\lambda\in$ $\mathbb{C}$ for
  which $\lambda I-T$ is not a Fredholm operator. The essential norm of $T$ will be denoted by $\parallel T\parallel_{e}$ which is defined as
\begin{equation*}
 \parallel T\parallel_{e}=\inf\{\parallel T+K\parallel:K\in K(X)\}
\end{equation*}
   The bracket $[\cdot]$ will denote the equivalence class modulo
  $K(X)$. An operator $T\in\mathcal{B}(H)$ is called essentially
  normal if $T^{*}T-TT^{*}\in K(H)$ where $H$ is a Hilbert space and
  $T^{*}$ denotes the Hilbert space adjoint of $T$.

  For $1\leq p < \infty$ the Hardy space of the unit disc will be
denoted by $H^{p}(\mathbb{D})$ and the Hardy space of the upper
half-plane will be denoted by $H^{p}(\mathbb{H})$.

  The two Hardy spaces $H^{2}(\mathbb{D})$ and $H^{2}(\mathbb{H})$
    are isometrically isomorphic. An isometric isomorphism $\Phi:H^{2}(\mathbb{D})\longrightarrow$ $H^{2}(\mathbb{H})$
is given by
   \begin{equation}\label{nice1}
    \Phi(g)(z)=
   \bigg(\frac{1}{\sqrt{\pi}(z+i)}\bigg)g\bigg(\frac{z-i}{z+i}\bigg)
   \end{equation}
The mapping $\Phi$ has an inverse
$\Phi^{-1}:H^{2}(\mathbb{H})\longrightarrow$ $H^{2}(\mathbb{D})$
given by
\begin{equation*}\label{nice2}
\Phi^{-1}(f)(z)= \frac{e^\frac{i\pi}{2}(4\pi)^\frac{1}{2}}{(1-z)}
f\bigg(\frac{i(1+z)}{1-z}\bigg)
\end{equation*}
For more details see [Hof, pp. 128-131].

The Toeplitz operator with symbol $a$ is defined as
    $$T_{a}=P M_{a}|_{H^{2}} ,$$
where $P$ denotes the orthogonal projection of $L^{2}$ onto
$H^{2}$. A good reference about Toeplitz operators on $H^{2}$ is
Douglas' treatise ([Dou]). Although the Toeplitz operators treated
in [Dou] act on the
 Hardy space of the unit disc, the results can be transfered
 to the upper half-plane case using the isometric isomorphism $\Phi$
 introduced by equation (5). In the sequel the following identity
 will be used:
\begin{equation}
     \Phi^{-1}\circ T_{a}\circ\Phi=T_{a\circ \mathfrak{C}^{-1}} ,
\end{equation}
where $a\in L^{\infty}(\mathbb{R})$. We also employ the fact
\begin{equation}
    \parallel T_{a}\parallel_{e}=\parallel
    T_{a}\parallel=\parallel a\parallel_{\infty}
\end{equation}
 for any $a\in L^{\infty}(\mathbb{R})$, which is a consequence
 of Theorem 7.11 of [Dou] (pp. 160--161) and equation (6). For any subalgebra $A\subseteq L^{\infty}(\mathbb{T})$ the Toeplitz C*-algebra generated
 by symbols in $A$ is defined to be
\begin{equation*}
 \mathcal{T}(A)=C^{*}(\{T_{a}:a\in A\}).
\end{equation*}
 It is a well-known result of Sarason (see [Sar2]) that the
 set of functions
\begin{equation*}
 H^{\infty}+C=\{f_{1}+f_{2}:f_{1}\in H^{\infty}(\mathbb{D}),f_{2}\in C(\mathbb{T})\}
\end{equation*}
 is a closed subalgebra of $L^{\infty}(\mathbb{T})$. The following theorem of
 Douglas [Dou] will be used in the sequel.
\begin{theorem} [\scshape Douglas' Theorem]
    Let $a$,$b\in$ $H^{\infty}+C$ then the semi-commutators
    $$T_{ab}-T_{a}T_{b}\in K(H^{2}(\mathbb{D})),\quad T_{ab}-T_{b}T_{a}\in K(H^{2}(\mathbb{D})), $$
    and hence the commutator
    $$[T_{a},T_{b}]=T_{a}T_{b}-T_{b}T_{a}\in K(H^{2}(\mathbb{D})).$$ \label{thmDouglas}
\end{theorem}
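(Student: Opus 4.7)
My plan is to reduce both semi-commutator statements to the compactness of Hankel operators. For $f \in L^\infty(\mathbb{T})$, let $H_f = (I-P) M_f P : H^2 \to (H^2)^{\perp}$, where $P$ is the Riesz projection onto $H^2$. A direct computation from the definitions yields the fundamental identity
\begin{equation*}
T_{fg} - T_f T_g \;=\; P M_f (I-P) M_g P \;=\; H_{\bar f}^{\,*}\, H_g,
\end{equation*}
valid for all $f,g \in L^{\infty}(\mathbb{T})$. Consequently the first semi-commutator is compact provided $H_b$ is compact, and the second is compact provided $H_a$ is compact.

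The central step is therefore Hartman's theorem: for $h \in H^{\infty}+C(\mathbb{T})$ the Hankel operator $H_h$ is compact. I would decompose $h = h_1 + h_2$ with $h_1 \in H^{\infty}$ and $h_2 \in C(\mathbb{T})$. Since $M_{h_1}$ preserves $H^2$, one has $H_{h_1} = 0$, so matters reduce to the continuous case. For a trigonometric polynomial $p(z) = \sum_{k=-N}^{N} c_k z^k$, applying $H_p$ to $z^m$ produces a nonzero contribution only when $m < N$, so $H_p$ has finite rank. Approximating $h_2$ uniformly by such polynomials $p_n$ and using $\|H_{h_2} - H_{p_n}\| \le \|h_2 - p_n\|_{\infty} \to 0$, it follows that $H_{h_2}$ is a norm limit of finite-rank operators, hence compact.

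Assembling the pieces: for $a, b \in H^{\infty}+C$, both $H_a$ and $H_b$ are compact by Hartman, so
\begin{equation*}
T_{ab} - T_a T_b \;=\; H_{\bar a}^{\,*} H_b \quad\text{and}\quad T_{ab} - T_b T_a \;=\; H_{\bar b}^{\,*} H_a
\end{equation*}
both lie in $K(H^2(\mathbb{D}))$; subtracting gives $[T_a,T_b] \in K(H^2(\mathbb{D}))$. The only non-formal step is the compactness of $H_{h_2}$ for continuous $h_2$, handled by the finite-rank approximation above; everything else is algebraic manipulation of the Hankel/Toeplitz identities. This is essentially the argument in Douglas' treatise, cited here as [Dou].
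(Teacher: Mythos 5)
Your argument is correct: the identity $T_{fg}-T_fT_g=PM_f(I-P)M_gP=H_{\bar f}^{\,*}H_g$ reduces everything to Hartman's theorem, which you establish properly by killing the $H^{\infty}$ part ($H_{h_1}=0$) and approximating the continuous part in norm by the finite-rank operators $H_{p_n}$, and the commutator statement follows by subtracting the two semi-commutators. The paper itself gives no proof of this theorem --- it is quoted from Douglas's treatise [Dou] --- and your Hankel-operator route is the standard argument that underlies that reference, so there is no substantive divergence to report.
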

Let $QC$ be the C*-algebra of functions in $H^{\infty}+C$ whose
complex conjugates also belong to $H^{\infty}+C$. Let us also
define the upper half-plane version of $QC$ as the following:
\begin{equation*}
    QC(\mathbb{R})=\{a\in L^{\infty}(\mathbb{R}):a\circ\mathfrak{C}^{-1}\in QC\}.
\end{equation*}
Going back and forth with Cayley transform one can deduce that
$QC(\mathbb{R})$ is a closed subalgebra of
$L^{\infty}(\mathbb{R})$.

 Let $scom(QC(\mathbb{T}))$ be the closed
ideal in $\mathcal{T}(QC(\mathbb{T}))$ generated by the
semi-commutators $\{T_{a}T_{b}-T_{ab}:a, b\in QC(\mathbb{T})\}$.
Then by Douglas' theorem, we have
\begin{equation*}
    com(\mathcal{T}(QC(\mathbb{T})))\subseteq
    scom(QC(\mathbb{T}))\subseteq K(H^{2}(\mathbb{D})) .
\end{equation*}
 By Proposition 7.12 of [Dou] we have
\begin{equation}
    com(\mathcal{T}(QC(\mathbb{T})))=scom(QC(\mathbb{T}))=K(H^{2}(\mathbb{D})) .
\end{equation}
 Now consider the symbol map
\begin{equation*}
 \Sigma:QC(\mathbb{T})\rightarrow\mathcal{T}(QC(\mathbb{T}))
\end{equation*}
  defined as
 $\Sigma(a)=T_{a}$. This map is linear but not necessarily multiplicative; however if we let $q$ be
 the quotient map
\begin{equation*}
    q:\mathcal{T}(QC(\mathbb{T})) \rightarrow \mathcal{T}(QC(\mathbb{T}))/scom(QC(\mathbb{T})) ,
\end{equation*}
then $q\circ\Sigma$ is multiplicative; moreover by equations (7)
and (8), we conclude that
 $q\circ\Sigma$ is an isometric *-isomorphism from $QC(\mathbb{T})$
 onto $\mathcal{T}(QC(\mathbb{T}))/K(H^{2}(\mathbb{D}))$.

\begin{definition}
Let $\varphi:\mathbb{D}\longrightarrow$ $\mathbb{D}$ or
$\varphi:\mathbb{H}\longrightarrow$ $\mathbb{H}$ be a holomorphic
self-map of the unit disc or the upper half-plane. The
\emph{composition
 operator} $C_{\varphi}$ on $H^{p}(\mathbb{D})$ or $H^{p}(\mathbb{H})$ with symbol $\varphi$ is defined by
\begin{equation*}
C_{\varphi}(g)(z)= g(\varphi(z)),\qquad
z\in\mathbb{D}\quad\textrm{or}\quad z\in\mathbb{H}.
\end{equation*}
\end{definition}

 Composition operators of the unit disc are always
 bounded [CM] whereas composition operators of the upper half-plane are not
 always bounded. For the boundedness problem of composition operators of the upper half-plane see
 [Mat].

The composition operator $C_{\varphi}$ on $H^{2}(\mathbb{D})$ is
carried over to
$(\frac{\tilde{\varphi}(z)+i}{z+i})C_{\tilde{\varphi}}$ on
$H^{2}(\mathbb{H})$ through $\Phi$, where $\tilde{\varphi}=$
$\mathfrak{C}\circ\varphi\circ\mathfrak{C}^{-1}$, i.e. we have
\begin{equation}
    \Phi C_{\varphi}\Phi^{-1} =
    T_{(\frac{\tilde{\varphi}(z)+i}{z+i})}C_{\tilde{\varphi}}.
\end{equation}

The Fourier transform $\mathcal{F}f$ of $f\in$
$\mathcal{S}(\mathbb{R})$ (the Schwartz space, for a definition
see [Rud, sec. 7.3, pp.~168]) is defined by
\begin{equation*}
(\mathcal{F}f)(t)=\hat{f}(t)=\frac{1}{\sqrt{2\pi}}\int_{-\infty}^{+\infty}e^{-itx}f(x)dx.
\end{equation*}
The Fourier transform extends to an invertible isometry from
$L^{2}(\mathbb{R})$ onto itself with inverse
\begin{equation*}
(\mathcal{F}^{-1}f)(t)=\frac{1}{\sqrt{2\pi}}\int_{-\infty}^{+\infty}
e^{itx}f(x)dx.
\end{equation*}
The following is a consequence of a theorem due to Paley and
Wiener [Koo, pp.~110--111]. Let $1 < p < \infty$. For $f\in$
$L^{p}(\mathbb{R})$, the following assertions are equivalent:
\begin{enumerate}
    \item[($i$)]  $f\in$ $H^{p}$,
    \item[($ii$)] $\textrm{supp}(\hat{f})\subseteq$ $[0,\infty)$
\end{enumerate}

A reformulation of the Paley-Wiener theorem says that the image of
$H^{2}(\mathbb{H})$ under the Fourier transform is
$L^{2}([0,\infty))$.

 By the Paley-Wiener theorem we observe that the operator
$$D_{\vartheta}=\Phi^{-1}\mathcal{F}^{-1}M_{\vartheta}\mathcal{F}\Phi$$
for $\vartheta\in C([0,\infty])$ maps $H^{2}(\mathbb{D})$ into
itself, where $C([0,\infty])$ denotes the set of continuous
functions on $[0,\infty)$ which have limits at infinity and $\Phi$
is the isometric isomorphism defined by equation (5). Since
$\mathcal{F}$ is unitary we also observe that
\begin{equation}
    \parallel D_{\vartheta}\parallel=\parallel M_{\vartheta}\parallel=\parallel\vartheta\parallel_{\infty}
\end{equation}
Let $F$ be defined as
\begin{equation}
F =\{D_{\vartheta}\in B(H^{2}(\mathbb{D})):\vartheta\in
C([0,\infty])\} .
\end{equation}
We observe that $F$ is a commutative C*-algebra with identity and
the map $D:C([0,\infty])\rightarrow F$ given by
\begin{equation*}
D(\vartheta)=D_{\vartheta}
\end{equation*}
is an isometric *-isomorphism by equation (10). Hence $F$ is
isometrically *-isomorphic to $C([0,\infty])$. The operator
$D_{\vartheta}$ is usually called a ``Fourier Multiplier.''

We observe that
\begin{equation}
T_{\frac{2i(1-z)}{2i+a(1-z)}}C_{\varphi_{a}}=D_{\vartheta_{a}}
\end{equation}
where
\begin{equation}
\varphi_{a}(z)=\frac{(2i-a)z+a}{-az+a+2i}
\end{equation}
 and
$$\vartheta_{a}(t)=e^{iat}$$.

\section{maximal ideal space of the Toeplitz-Composition
C*-algebra}

In this section we will analyze the structure of the C*-algebra
$C^{*}(\{C_{\varphi_{a}}\}\cup\mathcal{T}(PQC(\mathbb{T})))$
generated by the composition operator $C_{\varphi_{a}}$ and
Toeplitz operators with piece-wise quasi-continuous symbols where
$\varphi_{a}$ is as in equation (13) with $\Im(a)>0$. By equation
(12) and an easy application of Stone-Weierstrass theorem, this
C*-algebra is the same as $\Psi(PQC(\mathbb{T}),C([0,\infty]))$,
the C*-algebra generated by Toeplitz operators with piece-wise
quasi-continuous symbols and continuous Fourier multipliers.

   By the results of [Gul] and [Sch] the C*-algebra
   $\Psi(PQC(\mathbb{T}),C([0,\infty]))/K(H^{2})$ is commutative
   with identity. Hence it is of interest to characterize the
   maximal ideal space of this C*-algebra.

We will use the following theorem due to Power[Pow] in identifying
the maximal ideal space of
$\Psi(PQC(\mathbb{T}),C([0,\infty])/K(H^{2})$:
\begin{theorem} [\scshape Power's Theorem]
    Let $C_{1}$, $C_{2}$ and $C_{3}$ be three C*-subalgebras of $B(H)$ with identity,
    where $H$ is a separable Hilbert space, such that $M(C_{i})\neq$
    $\emptyset$, where $M(C_{i})$ is the space of multiplicative linear
    functionals of $C_{i}$, $i= 1,\,2,\,3$ and let $C$ be the C*-algebra that
    they generate. Then for the commutative C*-algebra $\tilde{C}=$
    $C/com(C)$ we have $M(\tilde{C})=$ $P(C_{1},C_{2},C_{3})\subset$
    $M(C_{1})\times M(C_{2})\times M(C_{3})$, where $P(C_{1},C_{2},C_{3})$ is defined to be
    the set of points $(x_{1},x_{2},x_{3})\in$ $M(C_{1})\times M(C_{2})\times M(C_{3})$
    satisfying the condition: \\
    \quad Given $0\leq a_{1} \leq 1$, $0 \leq a_{2} \leq 1$, $0 \leq a_{3} \leq 1$ $a_{1}\in C_{1}$, $a_{2}\in
    C_{2}$, $a_{3}\in C_{3}$
\begin{equation*}
    x_{i}(a_{i})=1\quad\textrm{with}\quad
        i=1,2,3\quad\Rightarrow\quad\| a_{1}a_{2}a_{3}\|=1.
\end{equation*}
    \label{thmpower}
\end{theorem}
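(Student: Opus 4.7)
The plan is to establish a homeomorphism $M(\tilde{C}) \cong P(C_1,C_2,C_3)$ via the obvious restriction map. Let $\pi \colon C \to \tilde{C} = C/\mathrm{com}(C)$ denote the quotient. For $x \in M(\tilde{C})$, set $x_i := x \circ \pi|_{C_i}$; this is a unital *-homomorphism $C_i \to \mathbb{C}$, hence lies in $M(C_i)$. The assignment $\Phi(x) = (x_1, x_2, x_3)$ is weak*-continuous, and injectivity is immediate because $\pi(C_1) \cup \pi(C_2) \cup \pi(C_3)$ generates the commutative algebra $\tilde{C}$ and characters are determined by their values on any generating set. The inclusion $\Phi(M(\tilde{C})) \subseteq P$ is equally quick: if $x_i(a_i) = 1$ with $0 \leq a_i \leq 1$, then multiplicativity yields $x(\pi(a_1 a_2 a_3)) = \prod_i x_i(a_i) = 1$, so $\|a_1 a_2 a_3\| \geq \|\pi(a_1 a_2 a_3)\|_{\tilde{C}} \geq 1$, while $\|a_1 a_2 a_3\| \leq \prod_i \|a_i\| \leq 1$.

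The substantive task is to show every $(x_1, x_2, x_3) \in P$ arises from some $x \in M(\tilde{C})$. I would consider the closed ideal $J \subseteq \tilde{C}$ generated by $\pi(\ker x_1) \cup \pi(\ker x_2) \cup \pi(\ker x_3)$. If $J$ is proper, it sits inside some maximal ideal $\ker x$ with $x \in M(\tilde{C})$; unitality of $x \circ \pi$ combined with the fact that $x \circ \pi$ annihilates each $\ker x_i$ then forces $x \circ \pi|_{C_i} = x_i$, which is what we need. The entire problem thus reduces to showing $J \neq \tilde{C}$.

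This reduction is the main obstacle, and is where the Power condition enters essentially. I would argue by contradiction: if $1 \in J$, then for any $\varepsilon > 0$ one can approximate the identity in $\tilde{C}$ to within $\varepsilon$ by a finite sum $\sum_j \pi(b_j c_j)$ with $b_j \in \ker x_{i_j}$ and $c_j \in C$. Using positive functional calculus on the $b_j^{*} b_j$ and the identity $x_i(1 - a) = 1 - x_i(a)$, one regroups terms by index $i \in \{1,2,3\}$ and extracts positive contractions $a_i \in C_i$ with $x_i(a_i) = 1$ for which $\|a_1 a_2 a_3\|_C$ is arbitrarily small, directly contradicting the defining property of $P$. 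The delicate point is the passage between the quotient norm on $\tilde{C}$ and the operator norm on $C$ in which the Power condition is phrased; handling the three-variable grouping and lifting the approximate decomposition back to a genuine norm estimate in $C$ is the technical crux. Once surjectivity is secured, a continuous bijection between the compact space $M(\tilde{C})$ and the Hausdorff space $P \subseteq M(C_1) \times M(C_2) \times M(C_3)$ is automatically a homeomorphism, completing the identification.
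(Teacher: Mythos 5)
The paper itself does not prove this theorem --- it is quoted from [Pow] and the proof is explicitly deferred there --- so your attempt has to be judged on its own terms. The soft half of your argument is fine: the map $x\mapsto(x\circ\pi|_{C_1},x\circ\pi|_{C_2},x\circ\pi|_{C_3})$ is well defined, injective, continuous, and lands in $P(C_1,C_2,C_3)$, and a continuous bijection from the compact space $M(\tilde C)$ onto a Hausdorff image is automatically a homeomorphism. The reduction of surjectivity to the properness of the ideal $J$ generated by $\pi(\ker x_1)\cup\pi(\ker x_2)\cup\pi(\ker x_3)$ is also correct: a character of $\tilde C$ killing $J$ restricts to $x_i$ on $C_i$ because $C_i=\mathbb{C}1\oplus\ker x_i$.

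The gap is in the one step that carries all the content, and it is not a removable technicality. First, the extraction of the contractions is only asserted; the actual device (for finitely many $d_j\in\ker x_i$ take $a_i=f\bigl(\sum_j d_jd_j^{*}\bigr)$ with $f$ continuous, $0\le f\le 1$, $f(0)=1$, supported near $0$, so that every $\Vert a_id_j\Vert$ is small) should be spelled out. Second, and decisively, the commutations needed to bring each $a_i$ next to its kernel factors are only available after passing to the commutative quotient, so the estimate your scheme produces is $\Vert\pi(a_1)\pi(a_2)\pi(a_3)\Vert_{\tilde C}<1$. The Power condition, however, asserts $\Vert a_1a_2a_3\Vert=1$ in the operator norm of $B(H)$, and $\Vert\pi(a_1a_2a_3)\Vert_{\tilde C}\le\Vert a_1a_2a_3\Vert$ with possibly strict inequality; no contradiction follows. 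As written, your argument only shows that triples satisfying the stronger condition $\Vert\pi(a_1a_2a_3)\Vert_{\tilde C}=1$ give characters, which is a weaker theorem than the one stated. Power closes exactly this gap by avoiding the quotient altogether: writing $F_i=\{a\in C_i:0\le a\le 1,\ x_i(a)=1\}$, for each $(a_1,a_2,a_3)\in F_1\times F_2\times F_3$ he chooses a unit vector in $H$ nearly normed by $a_1a_2a_3$, takes a weak* cluster point $\omega$ of the associated vector states along the decreasing net, deduces $\omega(a)=1$ for all $a\in F_1\cup F_2\cup F_3$, and then uses the Cauchy--Schwarz inequality (if $\omega(u)=1$ and $\Vert u\Vert\le 1$ then $\omega(uv)=\omega(vu)=\omega(v)$ for all $v$) to show $\omega$ is multiplicative on $C$, hence vanishes on $com(C)$ and descends to the desired character. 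That state-theoretic bridge between the $B(H)$ norm and $M(\tilde C)$ is precisely what your outline is missing.
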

The proof of this theorem can be found in [Pow]. We prove our main
theorem as follows:
\begin{thma}
Let
$$\Psi(PQC(\mathbb{T}),C([0,\infty])=C^{*}(\mathcal{T}(PQC(\mathbb{T}))\cup
F_{C([0,\infty])})=C^{*}(\mathcal{T}(PQC(\mathbb{T}))\cup\{C_{\varphi_{a}}\})$$
be the C*-algebra generated by Toeplitz operators with piece-wise
quasi-continuous symbols and Fourier multipliers on the Hardy
space $H^{2}(\mathbb{D})$. Then\\
$\Psi(PQC(\mathbb{T}),C([0,\infty])/K(H^{2})$ is a commutative
C*-algebra and its maximal ideal space is characterized as
\begin{equation*}
M(\Psi)\cong (M_{1}(\mathcal{T}(PC))\times M_{1}(QC)\times
[0,\infty])\cup
([\cup_{\lambda\in\mathbb{T}}(M_{\lambda}(\mathcal{T}(PC))\times
M_{\lambda}(QC))]\times\{\infty\})
\end{equation*}
where $M_{\lambda}(\mathcal{T}(PC))=\{x\in
M(\mathcal{T}(PC)):x\mid_{C(\mathbb{T})}=\delta_{\lambda},\delta_{\lambda}(T_{f})=f(\lambda)\}$
and $M_{\lambda}(QC)=\{x\in
M(QC):x\mid_{C(\mathbb{T})}=\delta_{\lambda},\delta_{\lambda}(T_{f})=f(\lambda)\}$
are the fibers of $M(\mathcal{T}(PC))$ and $M(QC)$ at $\lambda$
respectively.
\end{thma}
\begin{proof}
By the results of [Gul](Lemma 7) and [Sch](Lemma 2.0.15) we
already know that
$\Psi(PQC(\mathbb{T}),C([0,\infty])/K(H^{2}(\mathbb{D}))$ is a
commutative C*-algebra with identity. We will prove that the
characterization of its maximal ideal space is as above. We use
Power's theorem in our proof. Since for any ideal $I\subset A_{i}$
where $A_{i}$ for $i=1,2,3$ is a C*-algebra and $I$ is a closed
ideal we have $$C^{*}(A_{1}\cup A_{2}\cup A_{3})/I\cong
C^{*}((A_{1}/I)\cup (A_{2}/I)\cup (A_{3}/I)),$$ we take $H$ to be
such that $B(H^{2})/K(H^{2})\subset B(H)$,
$$ C_{1}=\mathcal{T}(PC(\mathbb{T}))/K(H^{2}),C_{2}=\mathcal{T}(QC(\mathbb{T}))/K(H^{2}),C_{3}=C^{*}(F_{C[0,\infty]}\cup K(H^{2}))/K(H^{2})$$
and
$$\quad\tilde{C}=\Psi(QC(\mathbb{T}),C([0,\infty]))/K(H^{2}(\mathbb{D})).$$
We have
\begin{equation*}
    M(C_{1})=M(\mathcal{T}(PC)),M(C_{2})=M(QC)\quad\textrm{and}\quad M(C_{3})=[0,\infty].
\end{equation*}
So we need to determine $(x,y,z)\in$ $M(\mathcal{T}(PC))\times
M(QC)\times [0,\infty]$ so
    that for all $a_{1}\in$ $PC(\mathbb{T})$,$a_{2}\in$ $QC(\mathbb{T})$ and $\vartheta\in$
    $C([0,\infty])$ with $0 < a,b, \vartheta\leq 1$, we have
\begin{equation*}
    \hat{T_{a_{1}}}(x)=\hat{a_{2}}(y)=\vartheta(z)=1\Rightarrow\parallel
    T_{a_{1}}T_{a_{2}}D_{\vartheta}\parallel_{e}=1\quad\textrm{or}\quad\parallel
    D_{\vartheta}T_{a_{1}}T_{a_{2}}\parallel_{e}=1.
\end{equation*}
For any $x\in$ $M(A)$ where
$A=\mathcal{T}(PC(\mathbb{T}))/K(H^{2})$ or
$A=QC(\mathbb{T})/K(H^{2})$ consider $\tilde{x}=$
    $x|_{C(\mathbb{T})}$ then $\tilde{x}\in$
    $M(C(\mathbb{T}))=$ $\mathbb{T}$. Hence
    $M(A)$ is fibered over $\mathbb{T}$, i.e.
\begin{equation*}
    M(A)=\bigcup_{\lambda\in\mathbb{T}}M_{\lambda},
\end{equation*}
    where
\begin{equation*}
    M_{\lambda}=\{x\in M(A):\tilde{x}=x|_{C(\mathbb{T})}=\delta_{\lambda}\}.
\end{equation*}
Let $x\in M(\mathcal{T}(PC))$ with $x\in M_{\lambda_{1}}$ and
$y\in M(QC)$ with $y\in M_{\lambda_{2}}$ such that
$\lambda_{1}\neq\lambda_{2}$. And let $z\in [0,\infty]$ be
arbitrary. Then there are functions $a_{1},a_{2}\in C(\mathbb{T})$
such that
$\hat{T_{a_{1}}}(x)=a_{1}(\lambda_{1})=\hat{T_{a_{2}}}(x)=a_{2}(\lambda_{2})=1$
and $\parallel a_{1}a_{2}\parallel_{\infty}<1$. Since
$$\parallel T_{a_{1}}T_{a_{2}}\parallel_{e}=\parallel T_{a_{1}a_{2}}\parallel_{e}$$ and $$\parallel
T_{a_{1}a_{2}}\parallel_{e}=\parallel
a_{1}a_{2}\parallel_{\infty}$$ we have $$\parallel
T_{a_{1}}T_{a_{2}}\parallel_{e}=\parallel
a_{1}a_{2}\parallel_{\infty}<1$$ So for $\vartheta(t)\equiv 1$ we
have $\vartheta(z)=1$ and $$\parallel
T_{a_{1}}T_{a_{2}}D_{\vartheta}\parallel_{e}=\parallel
T_{a_{1}}T_{a_{2}}\parallel_{e}=\parallel
a_{1}a_{2}\parallel_{\infty}<1.$$ Hence $(x,y,z)\not\in M(\Psi)$.
So if $(x,y,z)\in M(\Psi)$ and $x\in M_{\lambda_{1}}$, $y\in
M_{\lambda_{2}}$ then $\lambda_{1}=\lambda_{2}$.

Now let $x\in M(\mathcal{T}(PC))$ and $y\in M(QC)$ with $x,y\in
M_{\lambda}$ such that $\lambda\neq 1$. And let $z\in [0,\infty]$
with $z\neq\infty$. Then there is $a\in C(\mathbb{T})$ and
$\vartheta\in C([0,\infty])$ such that
$a(\lambda)=\vartheta(z)=1$. Using the isometric isomorphism
$\Phi$ introduced by equation (5) we have $$\parallel
T_{a}D_{\vartheta}\parallel=\parallel\Phi^{-1}\circ
(T_{a_{2}}\tilde{D}_{\vartheta})\circ\Phi\parallel=\parallel
T_{a_{2}}\tilde{D}_{\vartheta}\parallel_{H^{2}(\mathbb{H})}$$
where $a_{2}=a\circ\mathfrak{C}$ and
$\tilde{D}_{\vartheta}=\Phi\circ D_{\vartheta}\circ\Phi^{-1}$. Let
$a$ and $\vartheta$ have compact supports, let $1$ be out of the
support of $a$ and let $\tilde{\vartheta}$ be
\begin{equation*}
    \tilde{\vartheta}(w)=
        \begin{cases}
            \vartheta(w)\qquad\textrm{if}\qquad w\geq 0\\
            \vartheta(-w)\qquad\textrm{if}\qquad w < 0
        \end{cases}
\end{equation*}
Since $\lambda\neq 1$ and $1$ is out of the support of $a$,
$a_{2}$ has also compact support in $\mathbb{R}$. We have
\begin{equation*}
    PM_{a_{2}}\tilde{D}_{\tilde{\vartheta}}|_{H^{2}}=T_{a_{2}}\tilde{D}_{\vartheta},
\end{equation*}
    where $P:L^{2}\rightarrow$ $H^{2}$ is the orthogonal projection of
    $L^{2}(\mathbb{R})$ onto $H^{2}(\mathbb{H})$. So we have
\begin{equation*}
    \parallel T_{a_{2}}\tilde{D}_{\vartheta}\parallel_{H^{2}(\mathbb{H})}\leq\parallel M_{a_{2}}\tilde{D}_{\tilde{\vartheta}}\parallel_{L^{2}(\mathbb{R})}.
\end{equation*}
     By a result of
    Power (see [Pow2] and [Sch]) under these conditions we have
    \begin{equation}
        \parallel M_{a_{2}}\tilde{D}_{\tilde{\vartheta}}\parallel_{L^{2}}< 1\Rightarrow\parallel
T_{a_{1}}T_{a_{2}}\tilde{D}_{\vartheta}\parallel_{H^{2}}< 1
    \end{equation}
where $a_{1}\equiv 1$. Hence we have $(x,y,z)\not\in M(\Psi)$.

So if $(x,y,z)\in M(\tilde{C})$, then either $z= \infty$ or
$x,y\in M_{1}$.

Let $z= \infty$ and $x\in M(PC)$, $y\in M(QC)$. Let $a_{1}\in PC$,
$a_{2}\in QC$ and $\vartheta\in C([0,\infty])$ such that
\begin{equation*}
    0\leq a_{1},a_{2},\vartheta\leq
    1\quad\textrm{and}\quad\hat{T_{a_{1}}}(x)=\hat{a_{2}}(y)=\vartheta(z)= 1.
\end{equation*}
    Consider
    \begin{eqnarray}
& &\parallel\tilde{D}_{\vartheta}T_{a_{1}\circ\mathfrak{C}}T_{a_{2}\circ\mathfrak{C}}\parallel_{H^{2}(\mathbb{H})e}=\parallel\mathcal{F}\tilde{D}_{\vartheta}T_{(a_{1}\circ\mathfrak{C})(a_{2}\circ\mathfrak{C})}\mathcal{F}^{-1}\parallel_{L^{2}([0,\infty))e}\nonumber\\
    & &   =\parallel M_{\vartheta}\mathcal{F}T_{(a_{1}\circ\mathfrak{C})(a_{2}\circ\mathfrak{C})}\mathcal{F}^{-1}\parallel_{L^{2}([0,\infty))e}\nonumber\\
    & & =\parallel M_{\vartheta}\mathcal{F}(\mathcal{F}^{-1}M_{\chi_{[0,\infty)}}\mathcal{F})M_{(a_{1}\circ\mathfrak{C})(a_{2}\circ\mathfrak{C})}\mathcal{F}^{-1}\parallel_{L^{2}([0,\infty))e}\nonumber\\
    & & =\parallel M_{\vartheta}\mathcal{F}M_{(a_{1}\circ\mathfrak{C})(a_{2}\circ\mathfrak{C})}\mathcal{F}^{-1}\parallel_{L^{2}([0,\infty))e}.
    \end{eqnarray}
    Fix a compact operator $K\in K(H^{2})$. Since $x,y\in M_{\lambda}$ are on the same fiber we have $\parallel a_{1}a_{2}\parallel_{\infty}=1$. Hence it
    is possible to choose $g\in$ $L^{2}([0,\infty))$ with $\parallel g\parallel_{L^{2}([0,\infty))}=1$ such that
\begin{equation*}
    \parallel(\mathcal{F}M_{(a_{1}\circ\mathfrak{C})(a_{2}\circ\mathfrak{C})}\mathcal{F}^{-1})g\parallel\geq
    1-\varepsilon
\end{equation*}
    for given $\varepsilon > 0$. Since
    $\vartheta(\infty)= 1$ there exists $w_{0} > 0$ so that
\begin{equation*}
    1-\varepsilon\leq\vartheta(w)\leq 1\quad\forall w\geq w_{0}.
\end{equation*}
    Let $t_{0}\geq w_{0}$ such that $\parallel K S_{-t_{0}}g\parallel\leq\varepsilon$. Since the support of
    $(S_{-t_{0}}\mathcal{F}M_{(a_{1}\circ\mathfrak{C})(a_{2}\circ\mathfrak{C})}\mathcal{F}^{-1})g$ lies in
    $[t_{0},\infty)$ where $S_{t}$ is the translation by $t$, we have
    \begin{eqnarray}
    & &\parallel M_{\vartheta}(S_{-t_{0}}\mathcal{F}M_{(a_{1}\circ\mathfrak{C})(a_{2}\circ\mathfrak{C})}\mathcal{F}^{-1})g+K S_{-t_{0}}g\parallel_{2}\\
    \geq&\nonumber &\inf\{\vartheta(w): w\in (w_{0},\infty)\}\parallel(\mathcal{F}M_{(a_{1}\circ\mathfrak{C})(a_{2}\circ\mathfrak{C})}\mathcal{F}^{-1})g\parallel_{2}-\varepsilon\geq(1-\varepsilon)^{2}-\varepsilon
    \end{eqnarray}
    Since
\begin{equation*}
    S_{-t_{0}}\mathcal{F}M_{(a_{1}\circ\mathfrak{C})(a_{2}\circ\mathfrak{C})}\mathcal{F}^{-1}=\mathcal{F}M_{(a_{1}\circ\mathfrak{C})(a_{2}\circ\mathfrak{C})}\mathcal{F}^{-1}S_{-t_{0}}
\end{equation*}
and $S_{-t_{0}}$ is an isometry on $L^{2}([0,\infty))$ by
equations
    (15) and (16), we conclude that
\begin{equation*}
    \parallel M_{\vartheta}\mathcal{F}M_{(a_{1}\circ\mathfrak{C})(a_{2}\circ\mathfrak{C})}\mathcal{F}^{-1}\parallel_{L^{2}([0,\infty))e}=\parallel
    \tilde{D}_{\vartheta}T_{a_{1}\circ\mathfrak{C}}T_{a_{2}\circ\mathfrak{C}}\parallel_{H^{2}e}= 1
\end{equation*}
which implies that $(x,y,\infty)\in M(\Psi)$ $\forall x\in
M(\mathcal{T}(PC))$ and $y\in M(QC)$ with $x,y\in M_{\lambda}$.

Now let $x\in M_{1}(\mathcal{T}(PC))$, $y\in M_{1}(QC)$ and $z\in
[0,\infty]$. Let
    $a_{1}\in PC$, $a_{2}\in QC$ and $\vartheta\in C([0,\infty])$ such that
\begin{equation*}
    \hat{T_{a_{1}}}(x)=\hat{a_{2}}(y)=\vartheta(z)=1\quad\textrm{and}\quad
    0\leq a_{1},a_{2},\vartheta\leq 1.
\end{equation*}
We have two cases: Either $a_{1}$ is continuous at $1$ or $a_{1}$
is not continuous at $1$. Suppose $a_{1}$ is continuous at $1$.

    By a result of Sarason (see [Sar1]
    lemmas 5 and 7) for a given $\varepsilon > 0$ there is a $\delta> 0$ so that
    \begin{equation}
    \mid\hat{a_{2}}(y)-\frac{1}{2\delta}\int_{-\delta}^{\delta}a_{2}(e^{i\theta})d\theta\mid\leq\varepsilon.
    \end{equation}
    Since $\hat{a_{2}}(y)= 1$ and $0\leq a_{2}\leq 1$, this implies
    that for all $\varepsilon > 0$ there exists $w_{0} > 0$ such that
    $\sqrt{1-\varepsilon}\leq$ $a_{2}\circ\mathfrak{C}(w)\leq 1$ for a.e.\quad$w$ with $\mid
    w\mid > w_{0}$. Since $a_{1}$ is continuous at $1$ we also have $\sqrt{1-\varepsilon}\leq$ $a_{1}\circ\mathfrak{C}(w)\leq 1$ for a.e.\quad$w$ with $\mid
    w\mid > w_{0}$. Hence we have $1-\varepsilon\leq$ $(a_{1}\circ\mathfrak{C}a_{2}\circ\mathfrak{C})(w)\leq 1$ for a.e.\quad$w$ with $\mid
    w\mid > w_{0}$. Let $\tilde{\vartheta}$ be
\begin{equation*}
\tilde{\vartheta}(w)=
    \begin{cases}
    \vartheta(w)\qquad\textrm{if}\qquad w\geq 0\\
    0\qquad\textrm{if}\qquad w < 0
    \end{cases} .
\end{equation*}
Then we have
\begin{equation}
\tilde{D}_{\vartheta}T_{(a_{1}\circ\mathfrak{C})(a_{2}\circ\mathfrak{C})}=\tilde{D}_{\tilde{\vartheta}}M_{(a_{1}\circ\mathfrak{C})(a_{2}\circ\mathfrak{C})}.
\end{equation}
    Let $\varepsilon > 0$ be given. Let
    $g\in$ $H^{2}$ so that $\parallel g\parallel_{2}= 1$ and
    $\parallel D_{\tilde{\vartheta}}g\parallel_{2}\geq 1-\varepsilon$.
    Since $1-\varepsilon\leq$ $(a_{1}\circ\mathfrak{C}a_{2}\circ\mathfrak{C})(w)\leq 1$ for a.e.\quad$w$ with $\mid
    w\mid > w_{0}$, there is a $w_{1} > 2w_{0}$ so that
\begin{equation*}
\parallel S_{w_{1}}g-M_{(a_{1}\circ\mathfrak{C})(a_{2}\circ\mathfrak{C})}S_{w_{1}}g\parallel_{2}\leq 2\varepsilon.
\end{equation*}
Let $K\in K(H^{2})$ and let $w_{1} > 2w_{0}$ so that $\parallel K
S_{w_{1}}g\parallel\leq\varepsilon$.
    We have $\parallel\tilde{D}_{\tilde{\vartheta}}\parallel = 1$ and this implies that
    \begin{equation}
        \parallel\tilde{D}_{\tilde{\vartheta}}S_{w_{1}}g-\tilde{D}_{\tilde{\vartheta}}M_{(a_{1}\circ\mathfrak{C})(a_{2}\circ\mathfrak{C})}S_{w_{1}}g-K S_{w_{1}}g\parallel_{2}\leq 3\varepsilon .
    \end{equation}
    Since $S_{w}\tilde{D}_{\tilde{\vartheta}}= \tilde{D}_{\tilde{\vartheta}}S_{w}$ and
    $S_{w}$ is unitary for all $w\in \mathbb{R}$, we have
    $$\parallel\tilde{D}_{\tilde{\vartheta}}M_{(a_{1}\circ\mathfrak{C})(a_{2}\circ\mathfrak{C})}S_{w_{1}}g+K S_{w_{1}}g\parallel_{2}\geq 1-4\varepsilon$$
    and this implies that $$\parallel
    D_{\vartheta}T_{a_{1}}T_{a_{2}}\parallel_{e}=1$$
    Now let $a_{1}$ be discontinuous at $1$. Then since $x=(1,t)$
    where
    $\hat{T_{a_{1}}}(x)=\hat{T_{a_{1}}}((1,t))=ta_{1}(1^{-})+(1-t)a_{1}(1^{+})=1$
    and $\hat{T_{a_{1}}}(x)=1$, if $0<t<1$ then $a_{1}(1^{-})=a_{1}(1^{+})$ (since $0\leq a_{1}\leq 1$), this implies that $a_{1}$ is
    continuous at $1$ and this contradicts our assumption. So
    $t=1$ or $t=0$. This implies that $a_{1}(1^{-})=1$ or
    $a_{1}(1^{+})=1$. If $a_{1}(1^{+})=1$ then the same arguments
    leading to equations (18) and (19) apply and we obtain
    $$\parallel D_{\vartheta}T_{a_{1}}T_{a_{2}}\parallel_{e}=1.$$
    If $a_{1}(1^{-})=1$ then we proceed by the same arguments by
    replacing $S_{w_{1}}$ with $S_{-w_{1}}$ and similarly we obtain
    $$\parallel D_{\vartheta}T_{a_{1}}T_{a_{2}}\parallel_{e}=1.$$

\end{proof}

\section{essential spectra of linear combinations of Toeplitz and
composition operators}

In this last section we give an application of the main theorem
above to the problem of determining the essential spectra of
linear combinations of certain class of Toeplitz and composition
operators.

In [Gul] we showed that if
$\varphi:\mathbb{D}\rightarrow\mathbb{D}$ is of the following form
\begin{equation}
\varphi(z)=\frac{2iz+\eta(z)(1-z)}{2i+\eta(z)(1-z)}
\end{equation}
where $\eta\in$ $H^{\infty}(\mathbb{D})$ with $\Im(\eta(z)) >
\epsilon> 0$ for all $z\in$ $\mathbb{D}$, then
$\exists\alpha\in\mathbb{R}^{+}$ such that
\begin{equation}
    C_{\varphi} = T_{\frac{2i+\eta(z)(1-z)}{2i}}\sum_{n=0}^{\infty}T_{(i\alpha-\eta(z))^{n}}D_{\vartheta_{n}},
\end{equation}
where $\vartheta_{n}(t)=$ $\frac{(-it)^{n}e^{-\alpha t}}{n!}$. By
equation (21) we deduce that if $\eta\in QC$ then
$C_{\varphi}\in\Psi(PQC(\mathbb{T}),C[0,\infty])$. Using this fact
we can extract a full characterization of essential spectra of
operators $T_{a}C_{\varphi}$ and $C_{\varphi}+T_{a}$ where
$\varphi$ is as in equation (20) with $\eta\in QC$ and $a\in PC$.
We formulate our result as the following theorem:
\begin{theorem}
Let $a\in PC(\mathbb{T})$ and $\eta\in QC(\mathbb{T})$ with
$\Im(\eta(z)) > \epsilon> 0$ for all $z\in$ $\mathbb{D}$. Let
$\varphi:\mathbb{D}\rightarrow\mathbb{D}$ be an analytic self-map
of $\mathbb{D}$ of the following form
$$\varphi(z)=\frac{2iz+\eta(z)(1-z)}{2i+\eta(z)(1-z)}$$
then for $C_{\varphi}:H^{2}(\mathbb{D})\rightarrow
H^{2}(\mathbb{D})$ and $T_{a}:H^{2}(\mathbb{D})\rightarrow
H^{2}(\mathbb{D})$ we have
\begin{itemize}
\item (i)\quad $\sigma_{e}(T_{a}C_{\varphi})=\\
\overline{\{(sa(1^{-})+(1-s)a(1^{+}))e^{izt}:z\in\mathcal{C}_{1}(\eta),t\in
[0,\infty]\quad\textrm{and}\quad s\in [0,1]\}}$
\item (ii)\quad$\sigma_{e}(T_{a}+C_{\varphi})=\\
\overline{\{(sa(\lambda^{-})+(1-s)a(\lambda^{+}))+e^{izt}:z\in\mathcal{C}_{1}(\eta),t\in
[0,\infty],\lambda\in\mathbb{T}\quad\textrm{and}\quad s\in
[0,1]\}}$
\end{itemize}
where $\mathcal{C}_{1}(\eta)$ of $\eta\in
  H^{\infty}$ is the set of cluster points that is defined as the set of points
  $z\in$ $\mathbb{C}$ for which there is a sequence $\{z_{n}\}\subset$
  $\mathbb{D}$ so that $z_{n}\rightarrow$ $1$ and
  $\eta(z_{n})\rightarrow$ $z$, $a(\lambda^{+})=\lim_{\theta\rightarrow 0^{+}}a(\lambda
e^{i\theta})$ and $a(\lambda^{-})=\lim_{\theta\rightarrow
0^{-}}a(\lambda e^{i\theta})$.
\end{theorem}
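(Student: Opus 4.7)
The plan is to reduce the problem to computing the Gelfand transforms of $T_a$ and $C_\varphi$ on the maximal ideal space described by the Main Theorem, then taking ranges. By equation (21) of the excerpt,
\begin{equation*}
C_\varphi = T_{\frac{2i+\eta(z)(1-z)}{2i}}\sum_{n=0}^\infty T_{(i\alpha-\eta(z))^n}D_{\vartheta_n},\qquad \vartheta_n(t)=\frac{(-it)^n e^{-\alpha t}}{n!},
\end{equation*}
and since $\eta \in QC$ every Toeplitz factor sits in $\mathcal{T}(QC)$, so $C_\varphi \in \Psi(PQC,C([0,\infty]))$. Consequently $T_aC_\varphi$ and $T_a+C_\varphi$ lie in $\Psi$, and commutativity of $\Psi/K(H^2)$ from the Main Theorem reduces each essential spectrum to the image of the Gelfand transform over $M(\Psi)$.

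The central calculation is the value of $\widehat{C_\varphi}$ at a point $(x,y,t) \in M(\Psi)$. Write $x \in M_\lambda(\mathcal{T}(PC))$, $y \in M_\lambda(QC)$ in a common fiber over $\mathbb{T}$; Gohberg--Krupnik supplies a parameter $s \in [0,1]$ with $\widehat{T_b}(x) = sb(\lambda^-)+(1-s)b(\lambda^+)$ for $b \in PC$, while $\widehat{T_c}(x,y,t) = \hat y(c)$ for $c \in QC$ and $\widehat{D_\theta}(x,y,t) = \theta(t)$. At $t = \infty$ one has $\vartheta_n(\infty)=0$ for every $n$, so $\widehat{C_\varphi}(x,y,\infty)=0$. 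At a point in the $\lambda=1$ fiber with $t < \infty$, the prefactor $(2i+\eta(1-z))/2i$ is sent to $1$ (because $1-z$ vanishes in the fiber over $1$), and the series collapses to an exponential:
\begin{equation*}
\widehat{C_\varphi}(x,y,t) = \sum_{n=0}^\infty(i\alpha-\hat y(\eta))^n\,\frac{(-it)^n e^{-\alpha t}}{n!} = e^{-\alpha t}e^{-it(i\alpha-\hat y(\eta))} = e^{it\hat y(\eta)}.
\end{equation*}
Standard localization for $H^\infty + C$ identifies $\{\hat y(\eta) : y \in M_1(QC)\}$ with the cluster set $\mathcal{C}_1(\eta)$; note that $|e^{it\hat y(\eta)}| \leq e^{-\epsilon t}$ so letting $t \to \infty$ recovers the value $0$ continuously.

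With these values in hand, part (i) follows by multiplicativity of the Gelfand transform: $\widehat{T_aC_\varphi}(x,y,t) = \widehat{T_a}(x)\,\widehat{C_\varphi}(x,y,t)$ vanishes whenever $t = \infty$ (in particular for every $\lambda \neq 1$), while for $\lambda=1$, $t \in [0,\infty)$, and $z = \hat y(\eta) \in \mathcal{C}_1(\eta)$ it equals $(sa(1^-)+(1-s)a(1^+))\,e^{itz}$. Sweeping $s,t,z$ through their ranges and taking closure produces the formula in (i); the $t = \infty$ endpoint absorbs the trivial $0$ contribution. Part (ii) is strictly analogous: $\widehat{T_a+C_\varphi} = \widehat{T_a}+\widehat{C_\varphi}$, so the $\lambda \neq 1$ pieces (where $t = \infty$) produce exactly $\sigma_e(T_a)$, while the $\lambda=1$ fiber contributes the perturbation $+e^{itz}$, and the closure of the union yields the set displayed in (ii).

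The main obstacle is the explicit summation of the series defining $\widehat{C_\varphi}$, together with justifying termwise application of the Gelfand transform to the norm-convergent operator series, and the identification of Gelfand values of $\eta$ on the fiber $M_1(QC)$ with the cluster set $\mathcal{C}_1(\eta)$. Once these two ingredients are in place, the rest is bookkeeping against the fibered description of $M(\Psi)$ furnished by the Main Theorem.
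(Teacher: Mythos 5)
Your proposal is correct and follows essentially the same route as the paper: reduce via spectral permanence and commutativity of $\Psi/K(H^{2})$ to evaluating Gelfand transforms over the fibered description of $M(\Psi)$, use $\vartheta_{n}(\infty)=0$ to kill the fibers with $t=\infty$, sum the series to $e^{it\hat{y}(\eta)}$ on the fiber over $1$, and invoke the identification (Shapiro's theorem in the paper) of $\{\hat{y}(\eta):y\in M_{1}(QC)\}$ with $\mathcal{C}_{1}(\eta)$ together with the Gohberg--Krupnik parametrization $x=(1,s)$ of $M_{1}(\mathcal{T}(PC))$. Your added remark that $|e^{it\hat{y}(\eta)}|\leq e^{-\epsilon t}$, which reconciles the $t\to\infty$ limit with the value $0$, is a small refinement not made explicit in the paper but entirely consistent with it.
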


\begin{proof}
Let
$\Psi=\Psi(PQC(\mathbb{T}),C([0,\infty]))/K(H^{2}(\mathbb{D}))$.
Since $\Psi$ is C*-subalgebra of the Calkin algebra
$B(H^{2})/K(H^{2})$ and $[T_{a}C_{\varphi}]\in\Psi$, by equation
(2) we have
\begin{equation}
\sigma_{e}(T_{a}C_{\varphi})=\sigma_{B(H^{2})/K(H^{2})}([T_{a}C_{\varphi}])=\sigma_{\Psi}([T_{a}C_{\varphi}]).
\end{equation}
Since $\Psi$ is a commutative C*-algebra, by equation (3) we have
\begin{equation}
\sigma_{\Psi}([T_{a}C_{\varphi}])=\{\hat{[T_{a}C_{\varphi}]}(\phi):\phi\in
M(\Psi)\}
\end{equation}
 By the Main Theorem above, for any $\phi\in M(\Psi)$ we
have $\phi=(x,y,\infty)\in M(\mathcal{T}(PC))\times M(QC)\times
[0,\infty]$ where $x,y\in M_{\lambda}$ or $\phi=(x,y,z)\in
M(\mathcal{T}(PC))\times M(QC)\times [0,\infty]$ where $x,y\in
M_{1}$ and $z\in [0,\infty]$.

Let $\phi=(x,y,z)\in M(\mathcal{T}(PC))\times M(QC)\times
[0,\infty]$ with $x,y\in M_{\lambda}$, $\lambda\neq 1$ and
$z=\infty$. Then by equation (21) we have
\begin{eqnarray}
 &\nonumber&
 \hat{[T_{a}C_{\varphi}]}(\phi)=\phi([T_{a}T_{\frac{2i+\eta(z)(1-z)}{2i}}\sum_{n=0}^{\infty}T_{(i\alpha-\eta(z))^{n}}D_{\vartheta_{n}}])=\\
 & &\sum_{n=0}^{\infty}x(T_{a})y(T_{\frac{2i+\eta(z)(1-z)}{2i}})y(T_{(i\alpha-\eta(z))^{n}})\vartheta_{n}(\infty).
\end{eqnarray}
Since $\vartheta_{n}(\infty)=0$ $\forall n\in\mathbb{N}$ we have
$$\hat{[T_{a}C_{\varphi}]}(\phi)=0$$ in this case.

Now let $\phi=(x,y,z)\in M(\mathcal{T}(PC))\times M(QC)\times
[0,\infty]$ with $x,y\in M_{1}$ and $z\in [0,\infty]$. We observe
that since $y\in M_{1}$,
$\hat{[T_{\frac{2i+\eta(z)(1-z)}{2i}}]}(y)=1$ and hence by
equation (24) we have
\begin{eqnarray}
 &\nonumber &\hat{[T_{a}C_{\varphi}]}(\phi)=\sum_{n=0}^{\infty}x([T_{a}])(i\alpha-\hat{\eta}(y))^{n}\frac{(-iz)^{n}e^{-\alpha z}}{n!}=\\
 & &x(T_{a})e^{-\alpha z}\sum_{n=0}^{\infty}\frac{((-iz)(i\alpha-\hat{\eta}(y)))^{n}}{n!}=x(T_{a})e^{iz\hat{\eta}(y)}
\end{eqnarray}
By a result of Shapiro [Sha] we have for any
$\lambda\in\mathbb{T}$, $$\{\hat{\eta}(y):y\in
M_{\lambda}(QC)\}=\mathcal{C}_{\lambda}(\eta)$$ since $\eta\in
QC$. Since any $x\in M_{1}(\mathcal{T}(PC))$ is of the form
$x=(1,s)$ with $(1,s)([T_{a}])=sa(1^{-})+(1-s)a(1^{+})$ where
$s\in [0,1]$ we have $x([T_{a}])=sa(1^{-})+(1-s)a(1^{+})$ where
$a(1^{+})=\lim_{\theta\rightarrow 0^{+}}a(e^{i\theta})$ and
$a(1^{-})=\lim_{\theta\rightarrow 0^{-}}a(e^{i\theta})$. Combining
these with equation (25) the proof of (i) follows.

The proof of (ii) follows by a very similar manner: Like in
equation (24) we have
$$\hat{[T_{a}+C_{\varphi}]}(\phi)=\phi([T_{a}])+\phi([C_{\varphi}])=x([T_{a}])+\sum_{n=0}^{\infty}y([T_{\frac{2i+\eta(z)(1-z)}{2i}}])y([T_{(i\alpha-\eta(z))^{n}}])\vartheta_{n}(z)$$
where $\phi=(x,y,z)\in M(\mathcal{T}(PC))\times M(QC)\times
[0,\infty]$. For $z=\infty$ we have $\phi([C_{\varphi}])=0$. If
$x,y\in M_{1}$ then as in equation (25) together with the result
of Shapiro [Sha], $\phi([C_{\varphi}])=e^{i\mu z}$ for some
$\mu\in\mathcal{C}_{1}(\eta)$ and $z\in [0,\infty]$. As in
equations (22) and (23) we have
$$\sigma_{e}(T_{a}+C_{\varphi})=\{\hat{[T_{a}+C_{\varphi}]}(\phi):\phi\in
M(\Psi)\}$$ and the proof of (ii) follows.
\end{proof}
% ----------------------------------------------------------------
\bibliographystyle{amsplain}

\end{document}